\def\Z{\mathbb Z}
\title{A 3-Variable Bracket}
\author{Sóstenes Lins\footnote{Author e-mail: sostenes.lins@gmail.com.}
\footnote{The financial support of CNPq \& UFPE, Brazil (process
number 306106/2006)is acknowledged.}
 \\DMat / CCEN / UFPE \& \\
 Brazilian Academy of Science}
{\tiny\date{Version 1: \today}}
\begin{document}

\maketitle

\begin{abstract}
Kauffman's bracket is an invariant of regular isotopy of knots and
links which since its discovery in 1985 it has been used in many
different directions: (a) it implies an easy proof of the invariance
of (in fact, it is equivalent to) the Jones polynomial; (b) it is
the basic ingredient in a completely combinatorial construction for
quantum 3-manifold invariants; (c) by its fundamental character it
plays an important role in some theories in Physics; it has been used
in the context of virtual links; it has connections with many objects other objects
in Mathematics and Physics. I show in
this note that, surprisingly enough, the same idea that produces the bracket can be
slightly modified to produce algebraically stronger regular isotopy and
ambient isotopy invariants living in the quotient ring $R/I$, where
the ring $R$ and the ideal $I$ are:
\begin{center}
$R=\Z[\alpha,\beta,\delta]$,\   $I=<\,p_1, p_2\,>$, with
$p_1=\alpha^2 \delta + 2 \alpha \beta  \delta ^2 -\delta ^2+\beta ^2
\delta
   ,\ p_2=\alpha  \beta  \delta
   ^3+\alpha ^2 \delta ^2+\beta
   ^2 \delta ^2+\alpha  \beta
   \delta -\delta.$
\end{center}
It is easy to prove that any pair of links distinguished by the usual bracket is also distinguishable by the new invariant. The contrary is not necessarily true. However, a explicit example of a pair of knots not distinguished by the bracket and distinguished by this new invariant is an open problem.
\end{abstract}

\section{The brackets $\langle\, D\, \rangle$ and $[\,D\,]$ of a link diagram D}

\begin{definition} Kauffman's bracket maps a link diagram $D$ to
$\langle\ D \rangle \in \Z[\alpha^{-1},\alpha]$ and is characterized by
the following properties:
\begin{center}

(i)\ $\langle \bigcirc  \rangle = 1, \hspace{3mm}$ (ii)\ $\langle D
\cup \bigcirc \rangle = (-\alpha^{-2} - \alpha^2)\langle D \rangle$,
\hspace{3mm} (iii)\ $\langle$ \raisebox{-1mm}
{\includegraphics[width=3.5mm]{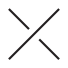}} $\rangle = \
\alpha \langle $ \raisebox{-1mm}
{\includegraphics[width=3.5mm]{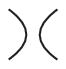}} $\, \rangle +
\alpha^{-1} \langle $ \raisebox{-1mm}{\includegraphics[width=3.5mm]
{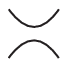}} $\rangle$.
\end{center}
\end{definition}

In this definition, $\bigcirc$  is a diagram of the unknot with no
crossing.  $D \cup \bigcirc$ is a diagram consisting of the diagram
$D$ together with  an extra closed curve $\bigcirc$ that contains no
crossing at all, either with itsef of with $D$. In property (iii)
the three link diagrams are the same except near the crossing where
they are smoothed in the way shown. bserve that the crossing at the
left of (iii) is not invariant under a 90$^{\tiny o}$ rotation and
thus $A$ and $A^{-1}$ at its right can not be interchanged. The
bracket polynomial of a link diagram with $n$ crossings can be
calculated by expressing it as the sum of $2^n$ diagrams with no
crossing by using (iii). By (i) and (ii) it follows that a link
diagram with $k$ components without crossings has $(-\alpha^{-2} -
\alpha^2)^{k-1}$ as its bracket polynomial.
 \label{def:statesum}

As a matter of fact, the bracket is designed to be blind under
Reidemeister move of type II, namely:\ $\langle$
\raisebox{-1mm}{\includegraphics[width=5mm] {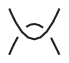}}
$\rangle = \langle$ \raisebox{-1mm}{\includegraphics[width=5mm]
{horizontalSmoothing.eps}}$\rangle$. Let the free variables
$\alpha\,\beta,\,\delta$ satisfy $\langle D \cup \bigcirc \rangle =
\delta \langle D \rangle$ and $\langle$ \raisebox{-1mm}
{\includegraphics[width=3.5mm]{plusCrossing.eps}} $\rangle = \
\alpha \langle $ \raisebox{-1mm}
{\includegraphics[width=3.5mm]{verticalSmoothing.eps}} $\, \rangle +
\beta \langle $ \raisebox{-1mm}{\includegraphics[width=3.5mm]
{horizontalSmoothing.eps}} $\rangle$. We seek restrictions on these
variables to make $\langle$
\raisebox{-1mm}{\includegraphics[width=5mm] {leftMove2.eps}}
$\rangle = \langle$ \raisebox{-1mm}{\includegraphics[width=5mm]
{horizontalSmoothing.eps}}$\rangle$ true. Consider the expansion:
\begin{center}
$\langle $\raisebox{-1mm}{\includegraphics[width=5mm]
{leftMove2.eps}}$\,\rangle$ =
 $\alpha\, \langle$ \raisebox{-1mm}{\includegraphics[width=5mm]
 {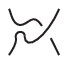}} $\rangle$
 $+\beta\, \langle$
\raisebox{-1mm}{\includegraphics[width=5mm] {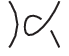}} $\rangle$
= $\alpha^2\, \langle$ \raisebox{-1mm}{\includegraphics[width=5mm]
{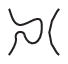}} $\rangle$
 $ + \alpha \beta \langle$
\raisebox{-1mm}{\includegraphics[width=5mm] {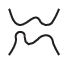}} $\rangle$
+ $\beta \alpha\, \langle $
\raisebox{-1mm}{\includegraphics[width=5mm] {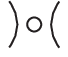}} $\rangle$
 $+\,\beta^2\, \langle$
\raisebox{-1mm}{\includegraphics[width=5mm] {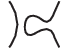}} $\rangle$
= \\ $ \alpha\beta\, \langle$
\raisebox{-1mm}{\includegraphics[width=4mm]
{horizontalSmoothing.eps}} $\rangle$ +
$(\alpha^2+\beta^2+\alpha\beta\delta)\, \langle$
\raisebox{-1mm}{\includegraphics[width=4mm] {verticalSmoothing.eps}}
$\rangle.$
\end{center}
By making (a) $\beta=\alpha^{-1}$ and (b) $\delta=-A^{-2}-A^2$ and
normalizing it to satisfy  $\langle \bigcirc  \rangle = 1$, we get
the bracket, which is insensitive under Reidemeister move II. A
wonderful feature of this scheme is that invariance of the bracket
under Reidemeister move III is for free and it behaves
multiplicatively under Reidemeister moves I: $\langle \hspace{-1mm}$
\raisebox{-1mm}{\includegraphics[width=4mm] {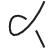}}
$\rangle$ = $-\alpha^3 \langle \hspace{-2mm}$
\raisebox{-1mm}{\includegraphics[height=4mm]
{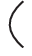}}\,$\rangle$ and $\langle \hspace{-1mm}$
\raisebox{-1mm}{\includegraphics[width=4mm] {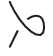}}
$\hspace{-1mm} \rangle$ = $-\alpha^3 \langle \hspace{-2mm}$
\raisebox{-1mm}{\includegraphics[height=4mm]
{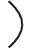}}\,$\rangle$. See Chapter 3 of Lickorish's
book\cite{Lick1997} or the Kauffman's original paper \cite{KA87}.
Because of the simple behavior of $\langle \hspace{3mm} \rangle$
under move I, to obtain an ambient isotopy invariant of links just
define $[\,D\,] = (-\alpha^3)^{-w(D)}\langle\, D\, \rangle.$

\section{The brackets $\langle D \rangle_3$  and $\left[ D \right]_3$ of a link diagram $D$}
My inspiration for this work comes from King \cite{King2005}.
The basic observation is that the above assignments are
too restrictive. Another line of thought provides invariance of
moves II and III under less restrictive assumptions than (a) and
(b). The idea is to consider the exterior of a crossing and fully
expand it. Then we get a more intimate relationship among the
three variables. In fact, after fully expanding the complement of a
pair $m$ of crossings forming the left side of a move II in a link
diagram $D$, there are well defined polynomials
$D^m_1=(D\backslash
m)_1(\alpha,\beta,\delta), \hspace{2mm}D^m_2=(D\backslash
m)_2(\alpha,\beta,\delta) \in \Z[\alpha,\beta,\gamma]$ such that the
exterior is expressible as the sum of two 2-tangles:
\begin{center}
$Ext(m,D)=\langle \hspace{0mm}
\raisebox{-1mm}{\includegraphics[height=4mm] {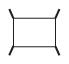}}\,\rangle_3
= D^m_1 \langle \hspace{0mm}
\raisebox{-1mm}{\includegraphics[height=4mm] {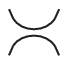}}\,\rangle_3
+ D^m_2 \langle \hspace{0mm}
\raisebox{-1mm}{\includegraphics[height=4mm]
{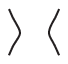}}\,\rangle_3.$
\end{center}
Consider also the two 2-tangles coming from the expansion of $m$
minus the horizontal 2-tangle (the right side of move II):
\begin{center}
$\langle\,m\,\rangle$ - $\langle$
\raisebox{-1mm}{\includegraphics[width=5mm]
{horizontalSmoothing.eps}}$\rangle_3$ = $\langle
$\raisebox{-1mm}{\includegraphics[width=5mm]
{leftMove2.eps}}$\,\rangle$ - $\langle$
\raisebox{-1mm}{\includegraphics[width=5mm]
{horizontalSmoothing.eps}}$\rangle_3$ =$ (\alpha\beta-1)\, \langle$
\raisebox{-1mm}{\includegraphics[width=4mm]
{horizontalSmoothing.eps}} $\rangle_3$ +
$(\alpha^2+\beta^2+\alpha\beta\delta)\, \langle$
\raisebox{-1mm}{\includegraphics[width=4mm] {verticalSmoothing.eps}}
$\rangle_3.$
\end{center}

By matching the corresponding output lines in the product of the
above 2-tangles we get the polynomials which must be zero to make
move II unseen by the new bracket:
\begin{center}
$0=(\alpha\beta-1) D^m_1\, \langle$
\raisebox{-1mm}{\includegraphics[width=8mm]
{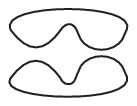}} $\rangle_3$ + $(\alpha\beta-1)
D^m_2\, \langle$ \raisebox{-1mm}{\includegraphics[width=8mm]
{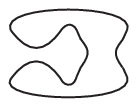}} $\rangle_3$ \\
$+(\alpha^2+\beta^2+\alpha\,\beta\,\delta) D^m_1\, \langle$
\raisebox{-1mm}{\includegraphics[width=8mm]
{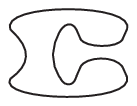}} $\rangle_3$ +
$(\alpha^2+\beta^2+\alpha\beta\delta) D^m_2\, \langle$
\raisebox{-1mm}{\includegraphics[width=8mm]
{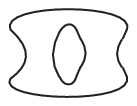}} $\rangle_3 = $\\
\vspace{2mm} $(\alpha\beta-1)\delta^2 D^m_1
+(\alpha\beta-1)\,\delta\, D^m_2+
(\alpha^2+\beta^2+\alpha\beta\delta)\,\delta\, D^m_1 +
(\alpha^2+\beta^2+\alpha\,\beta\,\delta)\,\delta^2\, D^m_2 =$\\
\vspace{1mm}
$[(\alpha\beta-1)\,\delta^2\,+(\alpha^2+\beta^2+\alpha\beta\delta)\,\delta\,]
D^m_1+
[(\alpha\beta-1)\,\delta\,+(\alpha^2+\beta^2+\alpha\,\beta\,\delta)\,\delta^2
]D^m_2.$
\end{center}
Define
$$p_1=(\alpha\beta-1)\,\delta^2\,+(\alpha^2+\beta^2+\alpha\,\beta\,\delta)\,\delta
=\alpha^2 \delta + 2 \alpha\,\beta\,\delta ^2-\delta^2+\beta ^2
\delta,$$ and
$$p_2=(\alpha\beta-1)\,\delta\,+(\alpha^2+\beta^2+\alpha\,\beta\,\delta)\,\delta^2=\alpha
\beta  \delta
   ^3+\alpha ^2 \delta ^2+\beta
   ^2 \delta ^2+\alpha  \beta
   \delta -\delta.$$
If $p_1$ and $p_2$ are zero, then move II is unseen by $\langle\,
\hspace{3mm} \rangle_3$. The algebraic variety defined by the
solution set of the polynomial system of equations consisting of
these two polynomials has 33 branches (easily obtained with
\emph{Mathematica}\texttrademark \cite{mathematica}): \vspace{5mm}

\noindent
$sol_1=\{\beta \to \frac{1}{\alpha },\delta \to
\frac{-\alpha \
^4-1}{\alpha ^2}\},$\\
$sol_2=\{\delta \to -1,\beta \to \alpha
-i\},$\\
$sol_3=\{\delta \to \ -1,\beta \to \alpha +i\},$\\
$sol_4=\{\delta \to
1,\beta \to -\alpha -1\},$\\
$sol_5=\{\delta \to \ 1,\beta \to 1-\alpha
\},$\\
$sol_6=\{\beta \to -\sqrt[6]{-1},\delta \to -1,\alpha \ \to
(-1)^{5/6}\},$\\
$sol_7=\{\beta \to \sqrt[6]{-1},\delta \to -1,\alpha \ \to
-(-1)^{5/6}\},$\\
$sol_8=\{\beta \to -\sqrt[3]{-1},\delta \to 1,\alpha \
\to (-1)^{2/3}\},$\\
$sol_9=\{\beta \to \sqrt[3]{-1},\delta \to 1,\alpha
\to \ -(-1)^{2/3}\},$\\
$sol_{10}=\{\beta \to -i-\sqrt[6]{-1},\delta \to
-1,\alpha \ \to -\sqrt[6]{-1}\},$\\
$sol_{11}=\{\beta \to i-\sqrt[6]{-1},\delta \to \ -1,\alpha \to
-\sqrt[6]{-1}\},$\\
$sol_{12}=\{\beta \to 2 i-\sqrt[6]{-1},\delta \ \to
-1,\alpha \to (-1)^{5/6}\},$\\
$sol_{12}=\{\beta \to -i+\sqrt[6]{-1},\delta \
\to -1,\alpha \to \sqrt[6]{-1}\},$\\
$sol_{13}=\{\beta \to \
i+\sqrt[6]{-1},\delta \to -1,\alpha \to \sqrt[6]{-1}\},$\\
$sol_{14}=\{\beta \ \to -2 i+\sqrt[6]{-1},\delta \to -1,\alpha \to \
-(-1)^{5/6}\},$\\
$sol_{15}=\{\beta \to -1-\sqrt[3]{-1},\delta \to 1,\alpha
\to \ \sqrt[3]{-1}\},$\\
$sol_{16}=\{\beta \to 1-\sqrt[3]{-1},\delta \to
1,\alpha \to \ \sqrt[3]{-1}\},$\\
$sol_{17}=\{\beta \to 2-\sqrt[3]{-1},\delta
\to 1,\alpha \to \ (-1)^{2/3}\},$\\
$sol_{18}=\{\beta \to
-2+\sqrt[3]{-1},\delta \to 1,\alpha \to \ -(-1)^{2/3}\},$\\
$sol_{19}=\{\beta \to -1+\sqrt[3]{-1},\delta \to 1,\alpha \to \
-\sqrt[3]{-1}\},$\\
$sol_{20}=\{\beta \to 1+\sqrt[3]{-1},\delta \to 1,\alpha \
\to -\sqrt[3]{-1}\},$\\
$sol_{21}=\{\delta \to -1,\beta \to -2 i,\alpha \to \
-i\},$\\
$sol_{22}=\{\delta \to -1,\beta \to 2 i,\alpha \to i\},$\\
$sol_{23}=\{\delta \to \ -1,\beta \to i-\sqrt[6]{-1},\alpha \to
-\sqrt[6]{-1}\},$\\
$sol_{24}=\{\delta \ \to -1,\beta \to
-i+\sqrt[6]{-1},\alpha \to \ \sqrt[6]{-1}\},$\\
$sol_{25}=\{\delta \to
-1,\beta \to i-(-1)^{5/6},\alpha \to \ -(-1)^{5/6}\},$\\
$sol_{26}=\{\delta \to -1,\beta \to -i+(-1)^{5/6},\alpha \to \
(-1)^{5/6}\},$\\
$sol_{27}=\{\delta \to 1,\beta \to -2,\alpha \to
1\},$\\
$sol_{28}=\{\delta \to \ 1,\beta \to 2,\alpha \to -1\},$\\
$sol_{29}=\{\delta \to 1,\beta \to \ 1-\sqrt[3]{-1},\alpha \to
\sqrt[3]{-1}\},$\\
$sol_{30}=\{\delta \to 1,\beta \to \ -1+\sqrt[3]{-1},\alpha
\to -\sqrt[3]{-1}\},$\\
$sol_{31}=\{\delta \to 1,\beta \ \to
-1-(-1)^{2/3},\alpha \to (-1)^{2/3}\},$\\
$sol_{32}=\{\delta \to 1,\beta \ \to
1+(-1)^{2/3},\alpha \to -(-1)^{2/3}\},$\\
$sol_{33}=\{\delta \to 0\}.$ \vspace{5mm}

The first of these branches corresponds to the usual bracket. Let
$I$ be the ideal of $R=\Z[\alpha\,,\beta\,,\delta]$ generated by
$p_1,p_2$, that is,  $I = \langle p_1, p_2\rangle$.

\begin{theorem} The class $I + \langle\,
D\, \rangle_3 \in R/I$ is a regular isotopy invariant of the link
diagram $D$.
\end{theorem}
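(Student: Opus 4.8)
The plan is to establish invariance under the two moves that generate regular isotopy, namely Reidemeister moves II and III, working throughout in the quotient $R/I$. First I would record that $\langle D \rangle_3$ is unambiguously an element of $R$: expanding every crossing by the rule $\langle\,\mathrm{crossing}\,\rangle_3 = \alpha\,\langle\,\mathrm{vertical}\,\rangle_3 + \beta\,\langle\,\mathrm{horizontal}\,\rangle_3$ and assigning the weight $\delta^{\,k-1}$ to a crossingless state with $k$ loops produces a finite sum of monomials $\alpha^{a}\beta^{b}\delta^{c}$ with $a,b,c\ge 0$. Since no choices enter the state sum, $\langle D \rangle_3 \in R$ is well defined as a function of the diagram, and the only content of the theorem is invariance.

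For move II the computation preceding the statement already does the essential work, and I would make it rigorous by noting that the exterior of the pair $m$ of crossings is a $2$-tangle, hence expands uniquely in the two-element Temperley--Lieb basis $\{\mathrm{horizontal},\mathrm{vertical}\}$; this uniqueness is exactly what produces the well-defined coefficients $D_1^m, D_2^m \in R$. Composing this expansion with the left side of move II minus its horizontal smoothing and evaluating the four resulting closures by counting loops (which contribute the factors $\delta^2,\delta,\delta,\delta^2$) yields
$$\langle D_{\mathrm{II}} \rangle_3 - \langle D_{\mathrm{hor}} \rangle_3 = p_1\, D_1^m + p_2\, D_2^m.$$
Because $p_1,p_2 \in I$ and $I$ is an ideal, the right-hand side lies in $I$, so the two diagrams have the same class in $R/I$.

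For move III I would use the standard reduction of move III to move II, checking that it survives passage to the quotient. Resolving one crossing of the move III configuration by the skein rule --- an \emph{exact} identity in $R$ --- writes each side of the move as $\alpha\,\langle (\cdot)_{\mathrm{vert}} \rangle_3 + \beta\,\langle (\cdot)_{\mathrm{hor}} \rangle_3$, where on the two sides one resolves a crossing of the same type so that the coefficients $\alpha,\beta$ match up. The corresponding summands are then related, one to another, by planar isotopy or by a move II. The move II invariance just established gives $\langle L_{\mathrm{vert}} \rangle_3 \equiv \langle R_{\mathrm{vert}} \rangle_3$ and $\langle L_{\mathrm{hor}} \rangle_3 \equiv \langle R_{\mathrm{hor}} \rangle_3 \pmod{I}$; multiplying these congruences by $\alpha$ and $\beta$ respectively and adding --- legitimate precisely because $I$ is an ideal, so $\alpha I + \beta I \subseteq I$ --- gives $\langle D^{L}_{\mathrm{III}} \rangle_3 \equiv \langle D^{R}_{\mathrm{III}} \rangle_3 \pmod{I}$. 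Since moves II and III generate regular isotopy, the class $I + \langle D \rangle_3$ is a regular isotopy invariant.

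The main obstacle, and the point needing the most care, is the move III step: one must confirm that after resolving a single crossing both sides of the triangle reduce, summand by summand, to diagrams that differ only by planar isotopy or a move II, so that no relation beyond $I$ is invoked. The reason the whole argument succeeds \emph{modulo} $I$, rather than requiring $p_1,p_2$ to vanish identically as in the classical bracket, is the clean separation between the exact skein identity in $R$ and the single approximate step, move II, whose error is absorbed by the ideal; keeping scrupulous track of which equalities hold on the nose and which hold only mod $I$ is the crux of the proof.
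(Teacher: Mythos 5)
Your proof is correct and follows essentially the same route as the paper: move II invariance via the identity $\langle D_{\mathrm{II}}\rangle_3-\langle D_{\mathrm{hor}}\rangle_3=p_1D_1^m+p_2D_2^m\in I$, and move III by the standard reduction to move II. The paper merely cites Lickorish for move III, whereas you usefully spell out why that reduction survives passage to the quotient --- namely that the skein expansion is exact in $R$ while the move II step only holds mod $I$, which is absorbed because $I$ is an ideal.
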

\begin{proof}
The invariance under Reidemeister move II has been established in
the above discussion, because the hypothesis makes the expression to
become zero for arbitrary polynomials $D^m_i(\alpha,\beta,\delta)$,
$i=1,2$. The invariance under move III holds exactly as in the case
of the simple bracket, see \cite{Lick1997}.
\end{proof}

There exists a very simple Gr\"obner basis (\cite{DJD1996}) for the
ideal $I$ in the lexicographic ordering of the monomials with the 3
variables taken in the order $\alpha > \beta > \delta$ is given by
$\mathfrak G=( q_1,\, q_2,\, q_3)$.  The $q$ polynomials are:
\begin{center}
$q_1=\delta ^3 \beta
   ^4-\delta  \beta ^4+\delta
   ^4 \beta ^2-\delta ^2 \beta
   ^2+\delta ^3-\delta,$\\
$q_2=\beta
   \delta ^4+\beta ^3 \delta
   ^3+\alpha  \delta ^3-\beta
   \delta ^2-\beta ^3 \delta
   -\alpha  \delta,$\\
$q_3=\delta
   \alpha ^2+2 \beta  \delta ^2
   \alpha -\delta ^2+\beta ^2
   \delta.$\\
\end{center}
Note that $\langle q_1,\, q_2,\,q_3 \rangle = I =  \langle p_1, p_2
\rangle$. An important property of Gr\"obner basis for an ideal $J$
is that each class $J + p$ has a distinguished representative which
depends on the Gr\"obner basis, on the monomial ordering and on an
ordering of the variables (\cite{DJD1996}). Such a {\em normal form}
or {\em reduced polynomial} is very quick to compute. Henceforth
{\em we define $\langle\, D\, \rangle_3$ to mean this distinguished
element of $I + \langle\, D\, \rangle_3$, modulo $\mathfrak G$ in
the lexicographic monomial ordering relative to the choice $\alpha
> \beta > \delta$.}
The behavior of $\langle \, D\, \rangle_3$ under Reidemeister moves
I are also fairly simple:
\begin{center}
$\langle $\raisebox{-1mm}{\includegraphics[width=6mm]
{plusCurl.eps}}$\,\rangle_3$ = $\langle\,(\beta \delta ^2+\alpha
\delta)$ $ $\hspace{-2mm}
\raisebox{-1mm}{\includegraphics[height=5.5mm]
{openingParenthesis.eps}}$\,\,\rangle_3$ \hspace{3mm} and
\hspace{3mm} $\langle $\raisebox{-1mm}{\includegraphics[width=6mm]
{minusCurl.eps}}$\hspace{-1mm} \rangle_3$ = $\langle\,(\alpha \delta
^2+\beta \delta)$ $ $\hspace{-2mm}
\raisebox{-1mm}{\includegraphics[height=5.5mm]
{closingParenthesis.eps}}$\,\,\,\rangle_3$
\end{center}

\begin{theorem} Assume the link diagram $D$ has non-negative writhe $w$.
Then, $\left[ D \right]_3=(\langle\, \alpha \delta ^2+\beta
\delta)^w D\,\rangle_3$ is an invariant of the ambient isotopy class
of $D$. If the writhe is negative, then $\left[ D \right]_3=
\langle\,(\beta \delta ^2+\alpha \delta)^{-w} D\,\rangle_3$ is an
invariant of the ambient isotopy class of $D$.
\end{theorem}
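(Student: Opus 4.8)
The plan is to reduce the statement to invariance under Reidemeister move I alone. Both the writhe $w$ and the class of $\langle D\rangle_3$ in $R/I$ are regular isotopy invariants -- the first obviously, the second by the preceding theorem -- and $[D]_3$ is assembled from these two data together with the sign of $w$. Since moves II and III leave $w$ unchanged, they leave its sign unchanged, the case distinction never switches, and $[D]_3$ inherits invariance under moves II and III for free. Thus the entire content of the theorem is the behaviour under move I, governed by the two curl formulas above and by the corresponding $\pm1$ change of $w$.

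Next I would dispose of the ``concordant'' curls in each regime. For $w\ge 0$ the prefactor is the negative-curl factor raised to the power $w$; a negative curl simultaneously lowers $w$ by one and multiplies $\langle D\rangle_3$ by exactly that factor, so the two effects telescope and $[D]_3$ is unchanged with no reference to $I$ (symmetrically, a positive curl is absorbed in the region $w<0$). What genuinely remains is a positive curl performed while $w\ge 0$, a negative curl performed while $w<0$, and the two moves that cross the threshold $w=0$. After cancelling the common power of the prefactor -- a factor lying in $R$, hence harmless for membership in $I$ -- each of these reduces to a single assertion: that a writhe-neutral pair consisting of one positive and one negative curl fixes the class of $\langle D\rangle_3$ in $R/I$.

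This writhe-neutral identity is the step I would isolate, and it is where the ideal finally enters. Smoothing a single curl replaces the local arc by $\alpha+\beta\delta$ (respectively $\beta+\alpha\delta$) times that arc, so a positive-then-negative pair multiplies $\langle D\rangle_3$ by $(\alpha+\beta\delta)(\beta+\alpha\delta)$. The decisive observation is that every Kauffman state of any diagram closes into at least one loop, so each monomial of the unreduced state sum carries a factor $\delta$; hence the class of $\langle D\rangle_3$ equals $\delta\,g$ for some $g\in R$. Coupling this with the explicit identity $p_2=\delta\,((\alpha+\beta\delta)(\beta+\alpha\delta)-1)$, one gets $((\alpha+\beta\delta)(\beta+\alpha\delta)-1)\langle D\rangle_3\equiv p_2\,g$, which lies in $I$. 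Thus the neutral pair acts as the identity modulo $I$, and multiplying back by the surviving power of the prefactor keeps us inside $I$; the two threshold moves at $w=0$ collapse to the very same identity, so all remaining cases close at once.

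The hard part is precisely this crux identity, and with it the honest bookkeeping of the powers of $\delta$: because the two curl factors are not units and their product is not $1$ on the nose, the classical trick of dividing by $(-\alpha^3)^{w}$ is unavailable, and it is the divisibility of $\langle D\rangle_3$ by $\delta$ that exactly absorbs the discrepancy through $p_2$. The second, more clerical, difficulty is to check that the case split on $\mathrm{sign}(w)$ is internally consistent -- that the two formulas agree where both apply and that a single move crossing $w=0$ is compensated -- so that $[D]_3$ is genuinely a well-defined function of $D$; this again devolves to the neutral-pair lemma but must be verified explicitly.
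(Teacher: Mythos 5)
Your overall strategy is more careful than the paper's own proof, which consists of the single sentence ``by introducing $|w|$ curls of the opposite sign of $sgn(w)$ we normalize the links to have writhe zero, and the Theorem follows.'' As you correctly observe, that normalization argument silently assumes the very point at issue: a chain of Reidemeister moves between two writhe-zero diagrams passes through diagrams of nonzero writhe, and because the two curl factors are non-units whose product is not $1$, the classical trick of dividing by a unit raised to the writhe is unavailable. Your reduction of everything to the single ``neutral pair'' identity, and your mechanism for proving it --- every state of a closed diagram contains at least one loop, so the unreduced state sum lies in $\delta R$, combined with the factorization $p_2=\delta\,((\alpha+\beta\delta)(\beta+\alpha\delta)-1)$, which is correct --- is exactly the content the paper omits.

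There is, however, an internal inconsistency in your argument that must be resolved before it closes. In the telescoping step you take the curl factors to be the ones appearing in the theorem's prefactors, namely $\alpha\delta^2+\beta\delta=\delta(\alpha\delta+\beta)$ and $\beta\delta^2+\alpha\delta=\delta(\beta\delta+\alpha)$ (these are also the curl formulas displayed just before the theorem); in the crux step you take them to be $\alpha+\beta\delta$ and $\beta+\alpha\delta$. These differ by a factor of $\delta$ per curl, and the discrepancy does not wash out modulo $I$: with the first pair of factors the neutral pair multiplies the bracket by $\delta^2(\alpha\delta+\beta)(\beta\delta+\alpha)=\delta p_2+\delta^2\equiv\delta^2 \pmod I$, and $(\delta^2-1)\langle D\rangle_3$ need not lie in $I$ --- already for the unknot it equals $\delta^3-\delta$, which is its own nonzero normal form with respect to $\mathfrak G$ since no leading term of $q_1,q_2,q_3$ divides $\delta^3$ or $\delta$. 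A direct one-crossing expansion of a kink (one smoothing produces an extra loop, the other does not) yields the $\delta$-free factors $\alpha\delta+\beta$ and $\alpha+\beta\delta$, i.e.\ the factors of your crux step are the correct ones and the displayed curl formulas carry a spurious $\delta$. Consequently the argument only closes if the $\delta$-free factors are used throughout, and then the statement you actually prove has prefactors $(\alpha\delta+\beta)^{w}$, respectively $(\alpha+\beta\delta)^{-w}$, rather than the ones in the theorem. You should make this correction explicit instead of switching normalizations between the two halves of the proof.
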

\begin{proof}
By introducing $|w|$ curls of the opposite sign of $sgn(w)$ we
normalize the links to have writhe zero, and the Theorem follows.
\end{proof}

\begin{conjecture}
There exist pairs of links distinguished by
$\left[ D \right]_3$ and not by $\left[ D \right].$
\end{conjecture}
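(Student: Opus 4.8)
The statement is purely existential, so the plan is to \emph{exhibit} (or at least to detect non-constructively) a pair of links $L,L'$ with $[L]=[L']$ but $[L]_3\neq[L']_3$. The key simplification is that every branch $sol_k$ of the common zero set of $\{p_1,p_2\}$ determines a ring homomorphism $\mathrm{ev}_k\colon R/I\to\mathbb{C}[\alpha]$ --- well defined precisely because $p_1$ and $p_2$ vanish on $sol_k$ --- under which the class $[D]_3$ maps to a one-variable ambient-isotopy invariant $V_k(D):=\mathrm{ev}_k([D]_3)$. Because $\mathrm{ev}_k$ is a homomorphism, a single inequality $V_k(L)\neq V_k(L')$ already forces $[L]_3\neq[L']_3$. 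Hence, to separate $L$ from $L'$, it suffices to find one branch on which their specialised invariants differ, and for \emph{this} direction no hypothesis on $I$ (such as being radical) is needed: elements of $I$ vanish on the zero set automatically.

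The next step is to isolate where genuinely new information can live. Among the thirty-three listed branches only $sol_1,\dots,sol_5$ are one-parameter families, with $\alpha$ remaining free; the rest are isolated points, and one checks that each of them lies on one of these five curves. For instance $sol_6$ is one of the points of the usual-bracket curve $sol_1$ where $\delta=-1$ (there $\beta=\alpha^{-1}$ and $\delta=-\alpha^{2}-\alpha^{-2}$), while $sol_{21}$ is the point $\alpha=-i$ of the curve $sol_2$, and $sol_{27},sol_{28}$ are points of $sol_4,sol_5$. Consequently the five specialisations $V_1,\dots,V_5$ carry all the discriminating power of $[D]_3$, with $V_1$ the Kauffman bracket. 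Moreover, on the curves $sol_2,\dots,sol_5$ the curl factors of the writhe normalisation described above are constants: they equal $\pm i$ on $sol_2,sol_3$ and $\pm1$ on $sol_4,sol_5$, so each $V_k$ ($k\ge2$) is an honest polynomial invariant in $\mathbb{Z}[i][\alpha]$ or $\mathbb{Z}[\alpha]$. The target therefore sharpens to finding $L,L'$ with $V_1(L)=V_1(L')$ but $V_k(L)\neq V_k(L')$ for some $k\in\{2,3,4,5\}$.

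To locate such a pair I would run a computer search over families of distinct links already known to share a Kauffman bracket, equivalently a Jones polynomial. The most promising candidates are \emph{non-mutant} same-Jones pairs, such as Kanenobu's knots $K(p,q)$, whose Jones polynomial depends only on the sum $p+q$ while their Alexander and HOMFLY polynomials depend on $p$ and $q$ separately; choosing $p+q=p'+q'$ with $\{p,q\}\neq\{p',q'\}$ guarantees $V_1(K(p,q))=V_1(K(p',q'))$ and supplies a whole reservoir of pairs that differ in strictly finer data. For each candidate I would compute the state sum $\langle D\rangle_3=\sum_{s}\alpha^{a(s)}\beta^{b(s)}\delta^{|s|-1}$, apply $\mathrm{ev}_k$, normalise by the curl factor, and compare. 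A more robust and entirely mechanical variant avoids hand-reduction modulo $\mathfrak G$: tabulate a large knot census, partition it by $V_1$ alone and by the tuple $(V_1,\dots,V_5)$, and test whether the finer partition has strictly more classes --- any split class exhibits the desired pair.

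The principal obstacle is not the bookkeeping but the real possibility that the conjecture is \emph{false}: a priori each $V_k$ with $k\ge2$ might turn out to be a function of the Jones polynomial, in which case no pair can ever separate $[\,D\,]_3$ from $[\,D\,]$. The formulas for $V_2,\dots,V_5$ are combinatorially unlike the bracket --- they record the numbers of $\alpha$- and $\beta$-smoothings \emph{separately} rather than only their difference, with the loop weight frozen at $\delta=\pm1$ --- which strongly suggests independence; but a formula that merely looks different may still coincide as an invariant. I therefore expect the crux to be proving that some $V_k$ is genuinely not determined by $V_1$. Short of an unexpected Temperley--Lieb or skein identification of $V_k$ with a classical Jones-equivalent evaluation (which would instead \emph{disprove} the conjecture), the decisive route is the explicit example: verifying that the Kanenobu family, or the first same-Jones pair in the knot tables, already realises $V_k(L)\neq V_k(L')$ completes the proof.
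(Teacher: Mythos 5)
The statement you are asked to prove is labelled a \emph{conjecture} in the paper, and the paper supplies no proof: the abstract states explicitly that ``an explicit example of a pair of knots not distinguished by the bracket and distinguished by this new invariant is an open problem.'' Your proposal does not close that gap. What you have written is a sensible \emph{search strategy} --- specialise $[\,D\,]_3$ along the branches of $V(p_1,p_2)$, obtain one-variable invariants $V_1,\dots,V_5$, and hunt for a same-Jones pair (e.g.\ Kanenobu knots) separated by some $V_k$ with $k\ge 2$ --- but the existential claim is never discharged: no pair is exhibited, no value of any $V_k$ is computed, and you yourself concede that each $V_k$ might a priori be a function of the Jones polynomial, in which case the conjecture would be false. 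A proof of an existence statement must end with a witness (or a non-constructive argument that one exists); your text ends with ``verifying \dots\ completes the proof,'' which is precisely the step that is missing. So the proposal is a research plan, not a proof, and in that respect it is in the same position as the paper itself.

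Two smaller points worth flagging in your reduction. First, $sol_{33}=\{\delta\to 0\}$ is a two-parameter branch, not an isolated point, so your claim that every branch other than $sol_1,\dots,sol_5$ is a point on one of those five curves is not quite right as stated (though the $\delta=0$ specialisation is highly degenerate and likely carries little information). Second, the assertion that ``the five specialisations $V_1,\dots,V_5$ carry all the discriminating power of $[\,D\,]_3$'' implicitly assumes the ideal $I=\langle p_1,p_2\rangle$ is radical; if it is not, $R/I$ contains nilpotent classes invisible to every point evaluation, and two links could have $[\,L\,]_3\neq[\,L'\,]_3$ while agreeing on all $V_k$. This does not damage the sufficiency direction you actually rely on (an inequality of evaluations does force inequality in $R/I$), but it means your proposed census partition by $(V_1,\dots,V_5)$ could in principle miss pairs that the full invariant separates.
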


\end{document}